\def\Dj{\hbox{D\kern-.73em\raise.30ex\hbox{-}
\raise-.30ex\hbox{}}}
\def\dj{\hbox{d\kern-.33em\raise.80ex\hbox{-}
\raise-.80ex\hbox{\kern-.40em}}}
\newtheorem{thm}{Theorem}
\newtheorem{cor}[thm]{Corollary}
\newtheorem{lem}[thm]{Lemma}
\newtheorem{obs}[thm]{Observation}
\title{On Graphs with equal Dominating and C-dominating energy}
\author{\bf S. M. Hosamani$^{a}$, V. B. Awati$^{b}$ and R. M. Honmore$^{c}$\\[5mm]
{\it \normalsize $^{a,b,c}$ Department of Mathematics, Rani Channamma University,} \\[-1mm]
{\it \normalsize Belagavi, India} \\[2mm]
\normalsize e-mail$^{a}$: {\tt sunilkumar.rcu@gmail.com } \\[2mm]
\normalsize e-mail$^{b}$: {\tt awati\_vb@yahoo.com}}
\date{}
\date{}
\begin{document}

\maketitle

\begin{abstract}
Graph energy and Domination in graphs are most studied areas of graph theory. In this paper we made an attempt to connect these two areas of graph theory by introducing c-dominating energy of a graph $G$. First, we show the chemical applications of c-dominating energy with the help of well known statistical tools.   Next, we obtain mathematical properties of c-dominating energy. Finally, we characterize trees, unicyclic graphs, cubic and block graphs  with equal dominating and c-dominating energy.
\\[3mm]
{\bf Keywords:} Dominating set, Connected dominating set, Energy, Dominating energy, c-Dominating energy.
\\[3mm]
{\bf \AmS \; Subject Classification:} 05C69; 05C90; 05C35; 05C12.
\end{abstract}

\baselineskip=0.30in

\section{Introduction}
All graphs considered in this paper are finite, simple and undirected. In particular, these graphs do not possess loops. Let $G=(V, E)$ be a graph with the vertex set $V(G)= \{v_{1}, v_{2}, v_{3}, \cdots, v_{n}\}$ and the edge set $E(G)= \{e_{1}, e_{2}, e_{3}, \cdots, e_{m}\}$, that is $|V(G)|= n$ and $|E(G)| = m$. The vertex $u$ and $v$ are adjacent if $uv\in E(G)$. The open(closed) neighborhood of a vertex $v \in V(G)$ is $N(v) = \{u: uv\in E(G)\}$ and $N[v] = N(v) \cup \{v\}$ respectively. The degree of a vertex $v \in V(G)$ is denoted by $d_{G}(v)$ and is defined as $d_{G}(v) = |N(v)|$. A vertex $v \in V(G)$ is pendant if $|N(v)| =1$ and is called support vertex if it is adjacent to pendant vertex. Any vertex $v \in V(G)$ with $|N(v)|>1$ is called internal vertex. If $d_{G}(v) =r$ for every vertex $v \in V(G)$, where $r \in \mathbb{Z}^{+}$ then $G$ is called r-regular. If $r=2$ then it is called cycle graph $C_{n}$ and for $r = 3$ it is called the cubic graph. A graph $G$ is unicyclic If $|V| = |E|$. A graph $G$ is called a block graph, if every block in $G$ is a complete graph. For undefined terminologies we refer the reader to \cite{harary}.
\vspace{2mm}

A subset $D\subseteq V(G)$ is called dominating set if $N[D]= V(G)$. The minimum cardinality of such a set $D$ is called the domination number $\gamma(G)$ of $G$. A dominating set $D$ is connected if the subgraph induced by $D$ is connected. The minimum cardinality of connected dominating set $D$ is called the connected dominating number $\gamma_{c}(G)$ of $G$ \cite{sampathkumar}.

The energy $E(G)$ of a graph $G$ is equal to the sum of the absolute values of the eigenvalues of the adjacency matrix of $G$. This quantity, introduced almost 30 years ago \cite{gutman1} and having a clear connection to chemical problems \cite{gutman3}, has in newer times attracted much attention of mathematicians and mathematical chemists \cite{coulson,consonni,cvetkovic,diaz,liu,milovanovc,rada,shparlinski,zhou}.

\indent In connection with energy (that is defined in terms of the eigenvalues of the adjacency matrix), energy-like quantities were considered also for the other matrices: Laplacian \cite{gutman3}, distance \cite{indulal}, incidence \cite{jooyandeh}, minimum covering energy \cite{adiga} etc. Recall that a great variety of matrices has so far been associated with graphs \cite{balban,bapat,cvetkovic,trinajstic}.\\

Recently in \cite{kanna} the authors have studied the dominating matrix which is defined as :\\
Let $G= (V, E)$ be a graph with $V(G) = \{v_{1}, v_{2}, \cdots, v_{n}\}$ and let $D \subseteq V(G)$ be a minimum dominating set of $G$.
The minimum dominating matrix of $G$ is the $n \times n$ matrix defined by $A_{D}(G) = (a_{ij})$, where $a_{ij} = 1$ if  $v_{i}v_{j} \in E(G)$ or $v_{i}=v_{j} \in D$, and $a_{ij} = 0$ if  $v_{i}v_{j} \notin E(G)$.  \\

The characteristic polynomial of $A_{D}(G)$ is denoted by $f_n(G, \mu) := det(\mu I-A_{D}(G))$.

The minimum  dominating eigenvalues of a graph $G$ are the eigenvalues of $A_{D}(G)$. Since $A_{D}(G)$ is real and symmetric, its eigenvalues are real numbers and we label them in non-increasing order $\mu_1 \ge \mu_2 \ge \cdots \ge \mu_n$. The minimum  dominating energy of $G$ is then defined as

\begin{center}
$E_{D}(G) = \sum \limits _{i=1}^n |\mu_i|$.
\end{center}

Motivated by dominating matrix, here we define the minimum connected dominating matrix abbreviated as (c-dominating matrix). The c-dominating matrix of $G$ is the $n \times n$ matrix defined by $A_{D_c}(G) = (a_{ij})$, where
\begin{center}
    $a_{ij} = \left\{
       \begin{array}{ll}
         1, & \hbox{if $v_{i}v_{j} \in E$;}
				\\[2mm]
         1, & \hbox{if $i = j$ and $v_{i} \in D_{c}$;}
				\\[2mm]
         0, & \hbox{otherwise.}
       \end{array}
     \right.
    $
\end{center}

The characteristic polynomial of $A_{D_c}(G)$ is denoted by $f_n(G, \lambda) := det(\lambda I-A_{D_c}(G))$.

The c-dominating eigenvalues of a graph $G$ are the eigenvalues of $A_{D_c}(G)$. Since $A_{D_c}(G)$ is real and symmetric, its eigenvalues are real numbers and we label them in non-increasing order $\lambda_1 \ge \lambda_2 \ge \cdots \ge \lambda_n$. The c-dominating energy of $G$ is then defined as

\begin{center}
$E_{D_c}(G) = \sum \limits _{i=1}^n |\lambda_i|$.
\end{center}
To illustrate this, consider the following examples:

\begin{center}
{
\begin{pspicture}(0,-0.33890626)(5.6228123,0.30890626)
\psdots[dotsize=0.2](1.6809375,0.17890625)
\psdots[dotsize=0.2](2.4809375,0.17890625)
\psdots[dotsize=0.2](3.2409375,0.17890625)
\psdots[dotsize=0.2](3.9609375,0.19890624)
\psdots[dotsize=0.2](4.7009373,0.19890624)
\psline[linewidth=0.02cm](1.7209375,0.19890624)(2.5009375,0.19890624)
\psline[linewidth=0.02cm](2.5009375,0.19890624)(3.2609375,0.19890624)
\psline[linewidth=0.02cm](3.2609375,0.19890624)(4.7609377,0.19890624)
\usefont{T1}{ptm}{m}{n}
\rput(1.8923438,-0.05109375){$v_{1}$}
\usefont{T1}{ptm}{m}{n}
\rput(2.6523438,-0.09109375){$v_{2}$}
\usefont{T1}{ptm}{m}{n}
\rput(3.5523438,-0.09109375){$v_{3}$}
\usefont{T1}{ptm}{m}{n}
\rput(4.2923436,-0.11109375){$v_{4}$}
\usefont{T1}{ptm}{m}{n}
\rput(4.952344,-0.01109375){$v_{5}$}
\usefont{T1}{ptm}{m}{n}
\rput(0.65234375,0.04890625){$P_{5}:$}
\end{pspicture}
}
\begin{center}
    Figure 1.
\end{center}
\end{center}

\noindent\textbf{Example 1.} Let $G$ be the 5-vertex path $P_{5}$, with vertices $v_{1}, v_{2}, v_{3}, v_{4}, v_{5}$ and let its minimum connected dominating set be $D_{c} = \{v_{2}, v_{3}, v_{4}\}$. Then

$$A_{D_{c}}(G) = \left(
  \begin{array}{ccccc}
    0 & 1 & 0 & 0 & 0 \\
    1 & 1 & 1 & 0 & 0 \\
    0 & 1 & 1 & 1 & 0 \\
    0 & 0 & 1 & 1 & 1 \\
    0 & 0 & 0 & 1 & 0 \\
  \end{array}
\right)
$$
The characteristic polynomial of $A_{D_{c}}(G)$ is $\lambda^{5}-3\lambda^{4}- \lambda^{3} +5\lambda^{2} + \lambda -1 = 0$. The minimum connected dominating eigenvalues are $ \lambda_{1} = 2.618$, $\lambda_{2} = 1.618$, $\lambda_{3} = 0.382$, $\lambda_{4} = -1.000$ and $\lambda_{5} = -0.618$. \\
Therefore, the minimum connected dominating energy is $E_{D_{c}}(P_{5}) = 6.236$.
\vspace{2mm}

\noindent\textbf{Example 2.} Consider the following graph
\begin{center}
{
\begin{pspicture}(0,-1.508125)(6.4428124,1.508125)
\psline[linewidth=0.02cm](1.3609375,0.1696875)(6.1409373,0.1696875)
\psdots[dotsize=0.2](1.3809375,0.1896875)
\psdots[dotsize=0.2](2.2609375,0.1696875)
\psdots[dotsize=0.2](3.0209374,0.1896875)
\psdots[dotsize=0.2](3.7609375,0.1896875)
\psdots[dotsize=0.2](4.4409375,0.1896875)
\psdots[dotsize=0.2](5.2009373,0.1896875)
\psdots[dotsize=0.2](6.1009374,0.1696875)
\psline[linewidth=0.02cm](1.3609375,0.2696875)(1.3609375,0.9696875)
\psline[linewidth=0.02cm](4.4209375,0.2096875)(4.4209375,-0.9303125)
\psdots[dotsize=0.2](4.4009376,-0.9503125)
\psdots[dotsize=0.2](1.3609375,0.9696875)
\psline[linewidth=0.02cm](1.3409375,0.0896875)(1.3409375,-0.5703125)
\psdots[dotsize=0.2](1.3209375,-0.6303125)
\usefont{T1}{ptm}{m}{n}
\rput(1.5223438,1.3196875){$a$}
\usefont{T1}{ptm}{m}{n}
\rput(1.0523437,0.1596875){$b$}
\usefont{T1}{ptm}{m}{n}
\rput(1.2423438,-0.8603125){$c$}
\usefont{T1}{ptm}{m}{n}
\rput(2.2523437,-0.0803125){$d$}
\usefont{T1}{ptm}{m}{n}
\rput(3.0023437,-0.1203125){$e$}
\usefont{T1}{ptm}{m}{n}
\rput(3.7423437,-0.1403125){$f$}
\usefont{T1}{ptm}{m}{n}
\rput(4.4023438,0.5196875){$g$}
\usefont{T1}{ptm}{m}{n}
\rput(5.1723437,-0.1003125){$h$}
\usefont{T1}{ptm}{m}{n}
\rput(6.182344,-0.1203125){$i$}
\usefont{T1}{ptm}{m}{n}
\rput(4.412344,-1.2803125){$j$}
\usefont{T1}{ptm}{m}{n}
\rput(0.30234376,0.1996875){$T:$}
\end{pspicture}
}
\begin{center}
    Figure 2.
\end{center}
\end{center}
Let $G$ be a tree $T$ as shown  above  and let its minimum connected dominating set be $D_{c} =\{b, d, e, f, g, h \}$. Then

$$ A_{D_{c}}(G) =
\left(
  \begin{array}{cccccccccc}
    0 &1& 0& 0& 0& 0& 0& 0& 0& 0 \\
    1 &1& 1& 1& 0& 0& 0& 0& 0& 0 \\
     0 &1& 0& 0& 0& 0& 0& 0& 0& 0\\
    0 &1& 0& 1& 1& 0& 0& 0& 0& 0 \\
    0 &0& 0& 1& 1& 1& 0& 0& 0& 0\\
     0 &0& 0& 0& 1& 1& 1& 0& 0& 0 \\
    0 &0& 0& 0& 0& 1& 1& 1& 0& 1 \\
     0 &0& 0& 0& 0& 0& 1& 1& 1& 0 \\
     0 &0& 0& 0& 0& 0& 0& 1& 0& 0 \\
     0 &0& 0& 0& 0& 0& 1& 0& 0& 0 \\
  \end{array}
\right)
$$
By direct calculation, we get the minimum connected dominating eigenvalues are $ \lambda_{1} = 2.945$, $\lambda_{2} = 2.596$, $\lambda_{3} = 1.896$, $\lambda_{4} = 1.183$, $\lambda_{5} = -1.263$, $ \lambda_{6} = -1.152$, $\lambda_{7} = 0.579$, $\lambda_{8} = 0.000$, $\lambda_{9} = -0.268$ and $\lambda_{10} = -0.516$ . \\
Therefore, the minimum connected dominating energy is $E_{D_{c}}(T) = 12.398$.
\vspace{2mm}

\noindent \textbf{Example 3.} The c-dominating energy of the following graphs can be calculated easily:
\begin{enumerate}
\item $E_{D_{c}}(K_{n}) = (n-2) + \sqrt{n(n-2) + 5}$, where $K_{n}$ is the complete graph of order $n$.
\item $E_{D_{c}}(K_{1, n-1}) = \sqrt{4n-3}$ where $K_{1, n-1}$ is the star graph.
\item $E_{D_{c}}(K_{n \times 2}) = (2n-3) + \sqrt{4n(n-1)-9}$, where $K_{n \times 2}$ is the coctail party graph.
\end{enumerate}

\indent In this paper, we are interested in studying the mathematical aspects of the c-dominating energy of a graph. This paper has organized as follows: The section 1, contains the basic definitions and background of the current topic. In section 2, we show the chemical applicability of c-dominating energy for molecular graphs $G$. The section 3, contains the mathematical properties of c-dominating energy. In the last section, we have characterized, trees, unicyclic graphs and cubic graphs and block graphs with equal minimum dominating energy and c-dominating energy. Finally, we conclude this paper by posing an open problem. 

\section{Chemical Applicability of $E_{D_{c}}(G)$}
We have used the c-dominating energy  for modeling eight representative physical properties like boiling points(bp), molar volumes(mv) at $20^{\circ}C$, molar refractions(mr) at $20^{\circ}C$, heats of vaporization (hv) at $25^{\circ}C$, critical temperatures(ct), critical pressure(cp) and surface tension (st) at $20^{\circ} C$ of the 74 alkanes from ethane to nonanes. Values for these properties were taken from http://www.moleculardescriptors.eu/dataset.htm.  The c-dominating energy $E_{D_{c}}(G)$ was correlated with each of these properties and surprisingly, we can see that the $E_{D_{c}}$ has a good correlation with the heats of vaporization of alkanes with correlation coefficient $r= 0.995$. \\
The following structure-property relationship model has been developed for the c-dominating energy $E_{D_{c}}(G)$.
\begin{eqnarray}
hv = 10 E_{D_{c}}(G) \pm 5.
\end{eqnarray}
\begin{center}
  \includegraphics[width=8cm]{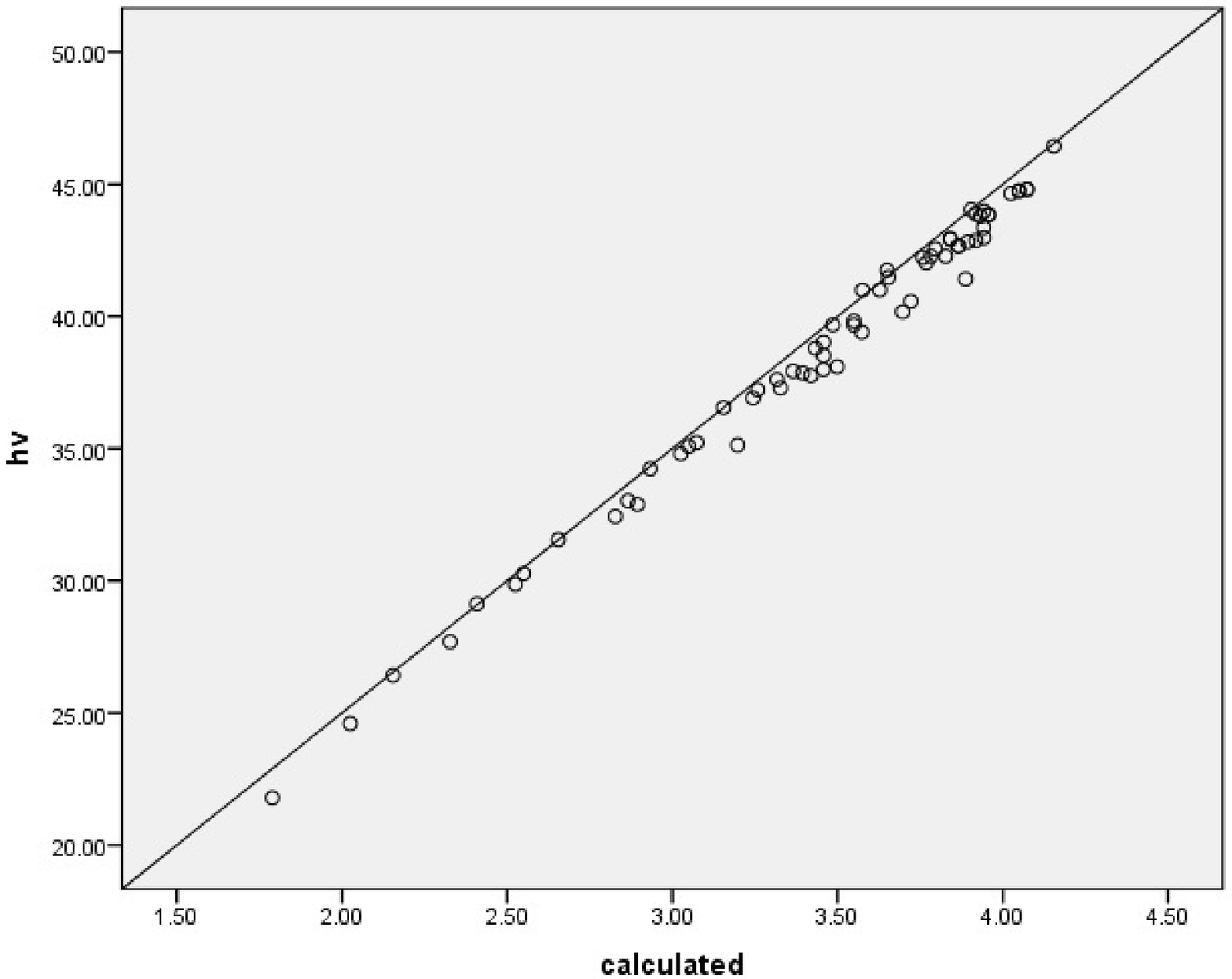}\\
  Figure 3: Correlation of $E_{D_{c}}(G)$ with heats of vaporization of alkanes.
  \end{center}

\section{Mathematical Properties of c-Dominating Energy of Graph}
We begin with the following straightforward observations.
\begin{obs}
Note that the trace of $A_{D_c}(G) = \gamma_c(G)$.
\end{obs}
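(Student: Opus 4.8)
The plan is to unwind the definition of the c-dominating matrix $A_{D_c}(G)$ and compute its trace directly. Recall that for an $n\times n$ matrix $A = (a_{ij})$ the trace is $\mathrm{tr}(A) = \sum_{i=1}^{n} a_{ii}$, so the only entries that matter are the diagonal ones. By the defining cases of $A_{D_c}(G)$, a diagonal entry $a_{ii}$ falls under the case $i=j$: it equals $1$ precisely when $v_i \in D_c$, and equals $0$ otherwise (the first case, $v_iv_j\in E$, never applies on the diagonal since the graphs considered are simple and loopless, as stated in the Introduction).

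Consequently each vertex of $D_c$ contributes exactly $1$ to the diagonal sum and every other vertex contributes $0$, so $\mathrm{tr}(A_{D_c}(G)) = \sum_{v_i \in D_c} 1 = |D_c|$. Since $D_c$ is taken to be a \emph{minimum} connected dominating set, $|D_c| = \gamma_c(G)$ by definition of the connected domination number, giving $\mathrm{tr}(A_{D_c}(G)) = \gamma_c(G)$.

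There is no real obstacle here; the statement is immediate from the definition. The only point worth a sentence of care is the observation that no diagonal entry is forced to $1$ by adjacency, which is exactly where the loopless hypothesis on $G$ is used. One could also phrase the conclusion eigenvalue-theoretically: since the trace equals the sum of eigenvalues, $\sum_{i=1}^{n}\lambda_i = \gamma_c(G)$, but this is not needed for the observation as stated.
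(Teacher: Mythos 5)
Your proof is correct and matches the paper's intent exactly: the paper states this as an unproved observation, and the argument you supply (diagonal entries are $1$ precisely for vertices of the minimum connected dominating set $D_c$, so the trace is $|D_c|=\gamma_c(G)$) is the only reasonable justification. Your remark that simplicity/looplessness prevents the adjacency case from affecting the diagonal is a sensible touch of care, though the paper leaves it implicit.
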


\begin{obs}
Let $G = (V, E)$ be a graph with $\gamma_c$-set $D_c$. Let $f_n(G,\lambda) = c_0 \lambda^n+c_1 \lambda^{n-1}+\cdots+c_n$ be the characteristic polynomial of $G$. Then
\begin{enumerate}
  \item $c_0 = 1$,
  \item $c_1 = - |D_c| = -\gamma_c(G)$.
\end{enumerate}
\end{obs}

\begin{thm}
If $\lambda_1,\lambda_2,\cdots,\lambda_n$ are the eigenvalues of $A_{D_c}(G)$, then
\begin{enumerate}
  \item $\sum\limits_{i=1}^n \lambda_i = \gamma_c(G)$
  \item $\sum\limits_{i=1}^n \lambda^2_i = 2m+\gamma_c(G)$.
\end{enumerate}
\end{thm}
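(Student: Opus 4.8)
The plan is to reduce both identities to computations of traces of powers of the c-dominating matrix, using the standard fact that for a real symmetric matrix the $k$-th power sum of its eigenvalues equals $\mathrm{tr}\bigl(A_{D_c}(G)^k\bigr)$. For part (1) this gives $\sum_{i=1}^n \lambda_i = \mathrm{tr}\bigl(A_{D_c}(G)\bigr) = \sum_{i=1}^n a_{ii}$, and by the definition of $A_{D_c}(G)$ the term $a_{ii}$ equals $1$ exactly when $v_i \in D_c$ and $0$ otherwise, so the sum counts $|D_c| = \gamma_c(G)$. This is precisely Observation 1, so part (1) is immediate.

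For part (2) I would start from $\sum_{i=1}^n \lambda_i^2 = \mathrm{tr}\bigl(A_{D_c}(G)^2\bigr) = \sum_{i=1}^n\sum_{j=1}^n a_{ij}a_{ji}$, and then use the symmetry $a_{ij}=a_{ji}$ to rewrite this as $\sum_{i,j} a_{ij}^2$. The key step is to split this double sum into its diagonal and off-diagonal parts. On the diagonal, each $a_{ii} \in \{0,1\}$, so $a_{ii}^2 = a_{ii}$ and $\sum_i a_{ii}^2 = \sum_i a_{ii} = \gamma_c(G)$ as in part (1). Off the diagonal, $a_{ij}^2 = 1$ precisely when $v_iv_j \in E(G)$ and $0$ otherwise, and each edge $v_iv_j$ is counted twice (once as $a_{ij}^2$ and once as $a_{ji}^2$), so $\sum_{i\neq j} a_{ij}^2 = 2m$. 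Adding the two contributions yields $2m + \gamma_c(G)$.

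I do not expect any genuine obstacle here; the argument is routine. The only points deserving a word of care are that the $\{0,1\}$-valued diagonal of $A_{D_c}(G)$ makes $a_{ii}^2 = a_{ii}$ (so the diagonal contribution is $\gamma_c(G)$, not something more complicated), and the usual double counting of each edge in the off-diagonal sum. Both reductions rest only on $A_{D_c}(G)$ being a real symmetric $0/1$ matrix whose diagonal encodes $D_c$ and whose off-diagonal encodes the edge set of $G$, which is exactly how it was defined.
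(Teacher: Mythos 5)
Your proposal is correct and follows essentially the same route as the paper: part (1) via the trace of $A_{D_c}(G)$ (Observation 1), and part (2) via $\mathrm{tr}\bigl(A_{D_c}(G)^2\bigr)$ split into the off-diagonal contribution $2m$ and the diagonal contribution $\gamma_c(G)$. Your explicit remark that $a_{ii}^2=a_{ii}$ for the $0/1$ diagonal is a small clarification the paper leaves implicit, but the argument is the same.
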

\begin{proof}
\indent \begin{enumerate}
  \item Follows from Observation 1.
  \item The sum of squares of the eigenvalues of $A_{D_c}(G)$ is just the trace of $A_{D_c}(G)^2$. Therefore
\begin{eqnarray*}
   \sum\limits_{i=1}^n \lambda^2_i &=& \sum\limits_{i=1}^n \sum\limits_{j =1}^n a_{ij}a_{ji}
	\\
&=& 2\sum\limits_{i < j}(a_{ij})^2+\sum\limits_{i =1}^n(a_{ii})^2
\\
&=& 2m + \gamma_c(G).
\end{eqnarray*}
\end{enumerate}
\end{proof}

We now obtain bounds for $E_{D_c}(G)$ of $G$, similar to McClelland's inequalities \cite{mcclelland} for graph energy.

\begin{thm}
Let $G$ be a graph of order $n$ and size $m$ with $\gamma_{c}(G) = k$. Then

\begin{eqnarray}
E_{D_c}(G) & \le & \sqrt{n(2m+k)}.
\end{eqnarray}
\end{thm}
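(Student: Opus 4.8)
The plan is to mimic McClelland's classical argument for graph energy, using only the two moment identities established in Theorem~1, namely $\sum_{i=1}^n \lambda_i = \gamma_c(G) = k$ and $\sum_{i=1}^n \lambda_i^2 = 2m+k$. The inequality to be proved is $E_{D_c}(G) \le \sqrt{n(2m+k)}$, and since $\sum_i \lambda_i^2 = 2m+k$ is exactly the quantity under the square root divided by $n$, this is a pure Cauchy--Schwarz statement: it does not even use the first-moment identity.

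First I would apply the Cauchy--Schwarz inequality to the two vectors $(|\lambda_1|,|\lambda_2|,\dots,|\lambda_n|)$ and $(1,1,\dots,1)$ in $\mathbb{R}^n$. This gives
\begin{eqnarray*}
E_{D_c}(G)^2 = \left(\sum_{i=1}^n |\lambda_i|\cdot 1\right)^2 \le \left(\sum_{i=1}^n |\lambda_i|^2\right)\left(\sum_{i=1}^n 1^2\right) = n\sum_{i=1}^n \lambda_i^2.
\end{eqnarray*}
Then I would substitute the second identity from Theorem~1, $\sum_{i=1}^n \lambda_i^2 = 2m+k$, to obtain $E_{D_c}(G)^2 \le n(2m+k)$, and take square roots (both sides are nonnegative) to conclude $E_{D_c}(G) \le \sqrt{n(2m+k)}$.

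There is essentially no obstacle here; the only thing worth a remark is that one should note $|\lambda_i|^2 = \lambda_i^2$, so that the Cauchy--Schwarz bound feeds directly into the trace-of-$A_{D_c}(G)^2$ computation without any sign issues. If one wished to track the equality case, Cauchy--Schwarz is tight precisely when all $|\lambda_i|$ are equal, which combined with $\sum \lambda_i^2 = 2m+k$ pins down $|\lambda_i| = \sqrt{(2m+k)/n}$ for every $i$; this is optional and not needed for the stated bound. The whole proof is two or three lines.
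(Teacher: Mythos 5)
Your proposal is correct and is essentially identical to the paper's proof: both apply Cauchy--Schwarz to $(1,\dots,1)$ and $(|\lambda_1|,\dots,|\lambda_n|)$ and then substitute $\sum_i\lambda_i^2=2m+k$ from Theorem~3. (Incidentally, your write-up is cleaner: the paper's final displayed line contains a typo, ``$=2(2m+k)$'' where it should read $n(2m+k)$.)
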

\begin{proof}
Let $\lambda_1 \ge \lambda_2 \ge \cdots \ge \lambda_n$ be the eigenvalues of $A_{D_c}(G)$. Bearing in mind the Cauchy-Schwarz inequality,
\begin{center}
    $\bigg(\sum\limits_{i=1}^n a_i b_i \bigg)^2 \le \bigg(\sum\limits_{i=1}^n a_i \bigg)^2 \bigg(\sum\limits_{i=1}^n b_i \bigg)^2$
\end{center}
we choose $a_i =1$ and $b_i = |\lambda_i|$, which by Theorem 3 implies
\begin{eqnarray*}
E_{D_c}^2  &=& \bigg(\sum\limits_{i=1}^n|\lambda_i| \bigg)^2
\\
& \leq & n\bigg(\sum\limits_{i=1}^n|\lambda_i|^2 \bigg)
\\
&=& n \sum\limits_{i=1}^n \lambda^2_i
\\
&=& 2(2m + k).
\end{eqnarray*}
\end{proof}

\begin{thm}
Let $G$ be a graph of order $n$ and size $m$ with $\gamma_c(G) = k$. Let $\lambda_1 \ge \lambda_2 \ge \cdots \ge \lambda_n$ be a non-increasing arrangement of eigenvalues of $A_{D_c}(G)$. Then

\begin{eqnarray}
E_{D_c}(G) & \ge & \sqrt{2mn+nk-\alpha(n)(|\lambda_1|-|\lambda_n|)^2}
\end{eqnarray}

where $\alpha(n) = n [\frac{n}{2}](1-\frac{1}{n}[\frac{n}{2}])$, where $[x]$ denotes the integer part of a real number $k$.
\end{thm}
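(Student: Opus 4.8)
The plan mirrors the familiar derivation of lower bounds on graph energy from the first two spectral moments, in the spirit of the McClelland-type estimate of Theorem~4, with Theorem~3 supplying the moments. First I would invoke Lagrange's identity: for any real numbers $a_1,\dots,a_n$,
\[
\sum_{1\le i<j\le n}(a_i-a_j)^2=n\sum_{i=1}^n a_i^2-\Bigl(\sum_{i=1}^n a_i\Bigr)^2 .
\]
Applying this with $a_i=|\lambda_i|$ and substituting Theorem~3 — so that $\sum_i|\lambda_i|^2=\sum_i\lambda_i^2=2m+k$ and $\sum_i|\lambda_i|=E_{D_c}(G)$ — turns it into
\[
\sum_{1\le i<j\le n}\bigl(|\lambda_i|-|\lambda_j|\bigr)^2=n(2m+k)-E_{D_c}(G)^2=2mn+nk-E_{D_c}(G)^2 .
\]
Hence the whole statement reduces to the single upper estimate
\[
\sum_{1\le i<j\le n}\bigl(|\lambda_i|-|\lambda_j|\bigr)^2\le\alpha(n)\,(|\lambda_1|-|\lambda_n|)^2 ;
\]
feeding this into the previous display and extracting a square root then delivers $(3)$.

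The real work is the combinatorial lemma behind that estimate: \emph{if $b_1\ge b_2\ge\cdots\ge b_n$ are real numbers, then $\sum_{i<j}(b_i-b_j)^2\le\alpha(n)\,(b_1-b_n)^2$, where $\alpha(n)=[n/2]\bigl(n-[n/2]\bigr)$.} To prove it I would set $t_i=(b_i-b_n)/(b_1-b_n)\in[0,1]$ (the case $b_1=b_n$ being trivial), so that $\sum_{i<j}(b_i-b_j)^2=(b_1-b_n)^2\,\Phi(t)$ with $\Phi(t)=\sum_{i<j}(t_i-t_j)^2=n\sum t_i^2-(\sum t_i)^2$. The Hessian of $\Phi$ equals $2(nI-J)$, with $J$ the all-ones matrix, which is positive semidefinite; so $\Phi$ is convex on the cube $[0,1]^n$ and attains its maximum at a vertex. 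At a vertex with exactly $p$ coordinates equal to $1$ one has $\Phi=p(n-p)$, and $\max_{0\le p\le n}p(n-p)=[n/2]\bigl(n-[n/2]\bigr)=\alpha(n)$. I would then apply the lemma to the numbers $|\lambda_i|$ listed in non-increasing order; here it helps that $A_{D_c}(G)$ is a nonnegative matrix, so by Perron--Frobenius $\lambda_1$ is its spectral radius and $|\lambda_1|=\max_i|\lambda_i|$ heads that list. Combining the lemma with the identity above gives $E_{D_c}(G)^2\ge 2mn+nk-\alpha(n)(|\lambda_1|-|\lambda_n|)^2$, which is $(3)$.

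The only genuinely substantive step is the extremal part of the lemma: recognising that $\Phi$ is convex on the cube, hence maximised at a $0$--$1$ vertex, and then evaluating that maximum as $[n/2]\bigl(n-[n/2]\bigr)$; the rest — Lagrange's identity, the substitution from Theorem~3, the final rearrangement — is mechanical. The point needing care, and the one where the parallel arguments in the energy literature are usually terse, is the role of $|\lambda_n|$: the lemma naturally produces $(|\lambda_1|-\min_i|\lambda_i|)^2$ on the right, so in reading $(3)$ one should take $|\lambda_n|$ to be the smallest eigenvalue in absolute value (or, at the cost of a weaker inequality, replace $(|\lambda_1|-|\lambda_n|)^2$ by $(\lambda_1-\lambda_n)^2$ via the reverse triangle inequality $\bigl|\,|\lambda_i|-|\lambda_j|\,\bigr|\le|\lambda_i-\lambda_j|$ and apply the lemma directly to the ordered $\lambda_i$). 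I would make that identification explicit rather than cite a lemma verbatim, since it is exactly the spot where such bounds can go astray.
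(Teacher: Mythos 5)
Your proof is correct and, at the top level, follows the same computation as the paper: both reduce the claim to bounding $n\sum_i|\lambda_i|^2-\bigl(\sum_i|\lambda_i|\bigr)^2$ by $\alpha(n)(|\lambda_1|-|\lambda_n|)^2$ and then substitute $\sum_i\lambda_i^2=2m+k$ from Theorem~3. The difference is in how that bound is obtained. The paper simply cites the Biernacki--Pidek--Ryll-Nardzewski (Gr\"uss-type) inequality \cite{biernacki} with $a_i=b_i=|\lambda_i|$, $a=b=|\lambda_n|$, $A=B=|\lambda_1|$; you instead prove the required symmetric special case from scratch, via Lagrange's identity $n\sum a_i^2-(\sum a_i)^2=\sum_{i<j}(a_i-a_j)^2$ and the convexity argument showing $\Phi(t)=n\sum t_i^2-(\sum t_i)^2$ is maximised on $[0,1]^n$ at a $0$--$1$ vertex, giving the sharp constant $[\frac{n}{2}](n-[\frac{n}{2}])=\alpha(n)$. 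This buys a self-contained argument, and more importantly it surfaces a hypothesis the paper's proof silently assumes: the cited inequality needs $|\lambda_n|\le|\lambda_i|\le|\lambda_1|$ for every $i$. The upper half holds because $A_{D_c}(G)$ is nonnegative and symmetric, so $\lambda_1$ is the spectral radius; but the lower half can fail, since the eigenvalues are ordered by value rather than by modulus (a spectrum such as $3,\,0.1,\,-2$ has $|\lambda_n|=2>\min_i|\lambda_i|$). Your lemma naturally yields $\bigl(\max_i|\lambda_i|-\min_i|\lambda_i|\bigr)^2$ on the right, and since $\min_i|\lambda_i|\le|\lambda_n|$ this is in general \emph{larger} than $(|\lambda_1|-|\lambda_n|)^2$, so the stated form does not follow without reinterpreting $|\lambda_n|$ as $\min_i|\lambda_i|$ (or weakening to $(\lambda_1-\lambda_n)^2$). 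You are right to make that identification explicit; it is a genuine gap in the paper's own proof, not in yours.
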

\begin{proof}
Let $a_1,a_2,\cdots,a_n$ and $b_1,b_2,\cdots,b_n$ be real numbers for which there exist real constants $a,b,A$ and $B$, so that for each $i$, $i = 1,2,\cdots,n,a \le a_{i} \le A$ and $b \le b_i \le B$. Then the following inequality is valid (see \cite{biernacki}).
\begin{eqnarray}
\mid n\sum\limits_{i =1}^n a_ib_i-\sum\limits_{i =1}^n a_i \sum\limits_{i=1}^n b_i \mid & \le & \alpha(n)(A-a)(B-b)
\end{eqnarray}

where $\alpha(n) = n [\frac{n}{2}](1-\frac{1}{n}[\frac{n}{2}])$. Equality holds if and only if $a_1 = a_2 = \cdots = a_n$ and $b_1 = b_2 = \cdots = b_n$.
\\
We choose $a_i := |\lambda_i|, b_i := |\lambda_i|$, $a = b := |\lambda_n|$ and $A = B := |\lambda_1|, i = 1,2,\cdots,n$, inequality (4) becomes
\begin{eqnarray}
|n \sum\limits_{i=1}^{n}|\lambda_{i}|^2-\bigg(\sum\limits_{i=1}^n|\lambda_i| \bigg)^2| & \le & \alpha(n)(|\lambda_1|-|\lambda_n|)^2
\end{eqnarray}
Since $E_{G_c}(G) = \sum\limits_{i=1}^n|\lambda_i|$, $\sum\limits_{i=1}^n |\lambda_i|^2 = \sum\limits_{i=1}^n |\lambda_{i}|^2  = 2m+k$ and $E_{D_c}(G) \le \sqrt{n(2m+k)}$, the inequality (5) becomes
\begin{eqnarray*}
n(2m+k)-(E_{D_c})^2 & \le & \alpha(n)(|\lambda_1|-|\lambda_n|)^2
\\
(E_{D_c})^2 & \ge & 2mn+nk-\alpha(n)(|\lambda_1|-|\lambda_n|)^2.
\end{eqnarray*}
Hence equality holds if and only if $\lambda_1 = \lambda_2 = \cdots = \lambda_n$.
\end{proof}

\begin{cor}
Let $G$ be a graph of order $n$ and size $m$ with $\gamma_c(G) = k$. Let $\lambda_1 \ge \lambda_2 \ge \cdots \ge \lambda_n$ be a non-increasing arrangement of eigenvalues of $A_{D_c}(G)$. Then
\begin{eqnarray}
E_{D_c}(G) & \geq & \sqrt{2mn+nk-\frac{n^2}{4}(|\lambda_1|-|\lambda_n|)^2}
\end{eqnarray}
\end{cor}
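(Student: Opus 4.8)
The plan is to deduce the corollary directly from Theorem 5, the only real work being the elementary estimate $\alpha(n)\le n^2/4$. Writing $t:=[\frac{n}{2}]$, the definition of $\alpha$ gives $\alpha(n)=n\,t\,\bigl(1-\tfrac{t}{n}\bigr)=t(n-t)$. Applying the arithmetic--geometric mean inequality (equivalently, completing the square) to the two nonnegative integers $t$ and $n-t$ yields
$$t(n-t)\le\left(\frac{t+(n-t)}{2}\right)^2=\frac{n^2}{4},$$
so that $\alpha(n)\le\frac{n^2}{4}$, with equality precisely when $t=n-t$, i.e.\ when $n$ is even.

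Feeding this estimate into Theorem 5 is then immediate. Since $(|\lambda_1|-|\lambda_n|)^2\ge 0$ and $\alpha(n)\le\frac{n^2}{4}$, we obtain
$$2mn+nk-\alpha(n)\,(|\lambda_1|-|\lambda_n|)^2\;\ge\;2mn+nk-\frac{n^2}{4}\,(|\lambda_1|-|\lambda_n|)^2,$$
and taking (nonnegative) square roots of both sides, together with the bound $E_{D_c}(G)\ge\sqrt{2mn+nk-\alpha(n)(|\lambda_1|-|\lambda_n|)^2}$ supplied by Theorem 5, gives exactly inequality (6).

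There is no genuine obstacle here: the corollary is a deliberate relaxation of Theorem 5 that trades the slightly unwieldy constant $\alpha(n)$ for the cleaner $n^2/4$. The only point worth a remark is that the right-hand side of (6) be meaningful, i.e.\ that $2mn+nk\ge\frac{n^2}{4}(|\lambda_1|-|\lambda_n|)^2$; when this fails, the asserted inequality is to be read as the trivial bound $E_{D_c}(G)\ge 0$, in the same spirit as the corresponding classical lower bound for the ordinary graph energy.
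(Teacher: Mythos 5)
Your proposal is correct and follows the paper's own route exactly: the paper's proof of this corollary is the one-line observation that $\alpha(n)=n[\tfrac{n}{2}](1-\tfrac{1}{n}[\tfrac{n}{2}])\le\tfrac{n^2}{4}$ combined with Theorem 5. You merely spell out the elementary bound $t(n-t)\le n^2/4$ that the paper takes for granted, which is a harmless (and welcome) elaboration.
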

\begin{proof}
Since $\alpha(n) = n [\frac{n}{2}](1-\frac{1}{n}[\frac{n}{2}]) \le \frac{n^2}{4}$, therefore by (3), result follows.
\end{proof}

\begin{thm}
Let $G$ be a graph of order $n$ and size $m$ with $\gamma_{c}(G) = k$. Let $\lambda_1 \ge \lambda_2 \ge \cdots \ge \lambda_n$ be a non-increasing arrangement of eigenvalues of $A_{D_c}(G)$. Then
\begin{eqnarray}
E_{G_c}(G) & \geq & \frac{|\lambda_1||\lambda_2|n+2m+k}{|\lambda_1|+|\lambda_n|}
\end{eqnarray}
\end{thm}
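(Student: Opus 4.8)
The bound is a reverse Cauchy--Schwarz type estimate, and my plan is to derive it from a single per-eigenvalue inequality, exactly as one does for the analogous lower bound on ordinary graph energy. First I would collect the ingredients that are already available: by Theorem 3, $\sum_{i=1}^{n}\lambda_i=\gamma_c(G)=k$ and $\sum_{i=1}^{n}\lambda_i^{2}=2m+k$. Since $A_{D_c}(G)$ is symmetric with nonnegative entries, Perron--Frobenius applies to it and its spectral radius equals $\lambda_1$; hence $|\lambda_i|\le\lambda_1=|\lambda_1|$ for every $i$, and, $G$ having at least one edge so that $A_{D_c}(G)\neq 0$, the denominator $|\lambda_1|+|\lambda_n|$ is strictly positive. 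It therefore suffices to prove
\[
E_{D_c}(G)\,\bigl(|\lambda_1|+|\lambda_n|\bigr)\;\ge\;n\,|\lambda_1|\,|\lambda_n|+2m+k .
\]

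The core step is the elementary claim that for each index $i$,
\[
\bigl(|\lambda_1|-|\lambda_i|\bigr)\bigl(|\lambda_i|-|\lambda_n|\bigr)\ge 0 ,
\qquad\text{i.e.}\qquad
|\lambda_i|^{2}+|\lambda_1|\,|\lambda_n|\le |\lambda_i|\bigl(|\lambda_1|+|\lambda_n|\bigr).
\]
Summing the second form over $i=1,\dots,n$ and then substituting $\sum_i|\lambda_i|^{2}=\sum_i\lambda_i^{2}=2m+k$ together with $\sum_i|\lambda_i|=E_{D_c}(G)$ yields precisely the displayed inequality above; dividing through by $|\lambda_1|+|\lambda_n|>0$ then gives the assertion (with the numerator read as $n|\lambda_1||\lambda_n|+2m+k$). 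So the whole proof collapses to the per-term claim, which amounts to saying that every modulus $|\lambda_i|$ lies in the interval $[\,|\lambda_n|,\,|\lambda_1|\,]$.

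I expect that bracketing to be the genuinely delicate point. Its upper end, $|\lambda_i|\le|\lambda_1|$, is the Perron--Frobenius fact recorded above; its lower end, $|\lambda_i|\ge|\lambda_n|$, is however not automatic --- it can fail when $A_{D_c}(G)$ is singular while $\lambda_n<0$. So in the write-up one must either add the hypothesis that all the moduli are bracketed between $|\lambda_n|$ and $|\lambda_1|$ (which I take to be the intended reading, and under which the proof is just the two displays above) or else restrict the summation to those indices $i$ with $|\lambda_i|\ge|\lambda_n|$ and handle the remaining small-modulus eigenvalues separately, at the cost of a slightly weaker constant. I would flag that caveat explicitly rather than assert the clean form in full generality.
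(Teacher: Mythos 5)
Your argument is essentially the paper's: the per-term inequality $(|\lambda_1|-|\lambda_i|)(|\lambda_i|-|\lambda_n|)\ge 0$, summed over $i$, is exactly the proof of the Diaz--Metcalf inequality that the paper invokes with $a_i=1$, $b_i=|\lambda_i|$, $r=|\lambda_n|$, $R=|\lambda_1|$, after which both you and the paper substitute $\sum_i|\lambda_i|^2=2m+k$ and $\sum_i|\lambda_i|=E_{D_c}(G)$ and read the numerator as $n|\lambda_1||\lambda_n|+2m+k$ (the printed $|\lambda_2|$ being a typo). The caveat you flag is genuine and applies equally to the paper's proof: Diaz--Metcalf requires $|\lambda_n|\le|\lambda_i|\le|\lambda_1|$ for every $i$, and the lower bracket can fail (e.g.\ when $0$ is an eigenvalue while $\lambda_n<0$), so the hypothesis-safe choice is $r=\min_i|\lambda_i|$; the paper asserts the condition without checking it, so your write-up is, if anything, the more careful of the two.
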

\begin{proof}
Let $a_1,a_2,\cdots,a_n$ and $b_1,b_2,\cdots,b_n$ be real numbers for which there exist real constants $r$ and $R$ so that for each $i$, $i = 1,2,\cdots,n$  holds $ra_i \le b_i \le Ra_i$. Then the following inequality is valid (see \cite{diaz}).
\begin{eqnarray}
\sum\limits_{i=1}^n b^2_i + rR\sum\limits_{i=1}^n a^2_i & \le & (r+R)\sum\limits_{i=1}^n a_i b_i.
\end{eqnarray}
Equality of (8) holds if and only if, for at least one $i$, $1 \le i \le n$ holds $ra_i = b_i = Ra_i$.
\\
For $b_i := |\lambda_i|$, $a_i := 1$ $r := |\lambda_n|$ and $R := |\lambda_1|$, $i = 1,2,\cdots,n$ inequality (8) becomes
\begin{eqnarray}
\sum\limits_{i=n}^{n}|\lambda_i|^2+|\lambda_1||\lambda_n|\sum\limits_{i=1}^{n}1 \le (|\lambda_1|+|\lambda_n|)\sum\limits_{i=1}^n|\lambda_i|.
\end{eqnarray}
Since $\sum\limits_{i=1}^{n}|\lambda_i|^2 = \sum\limits_{i=1}^{n}\lambda_i^2 = 2m+k$, $\sum\limits_{i=1}^n|\lambda_i| = E_{D_c}(G)$, from inequality (9),
\begin{eqnarray*}
2m+k+|\lambda_1||\lambda_n|n & \le & (\lambda_1+\lambda_n)E_{D_c}(G)
\end{eqnarray*}
Hence the result.
\end{proof}
\begin{thm}
Let $G$ be a graph of order $n$ and size $m$ with $\gamma_{c}(G) = k$. If $\xi = |detA_{D_c}(G)|$, then
\begin{eqnarray}
E_{D_c}(G) & \ge & \sqrt{2m+k+n(n-1)\xi^\frac{2}{n}}.
\end{eqnarray}
\end{thm}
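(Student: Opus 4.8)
The plan is to mimic the classical McClelland-/Koolen--Moulton-type lower bound for ordinary graph energy, adapting it to the matrix $A_{D_c}(G)$. The starting point is to square the energy and split the resulting double sum into its ``diagonal'' and ``off-diagonal'' parts:
\begin{eqnarray*}
E_{D_c}(G)^2 &=& \Bigl(\sum_{i=1}^n |\lambda_i|\Bigr)^2 \;=\; \sum_{i=1}^n |\lambda_i|^2 \;+\; \sum_{i \neq j} |\lambda_i|\,|\lambda_j|.
\end{eqnarray*}
By Theorem 3 the first sum equals $2m+k$, so the whole problem reduces to bounding the off-diagonal sum below by $n(n-1)\,\xi^{2/n}$.

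For that, apply the arithmetic--geometric mean inequality to the $n(n-1)$ nonnegative numbers $|\lambda_i|\,|\lambda_j|$ with $i \neq j$:
\begin{eqnarray*}
\frac{1}{n(n-1)}\sum_{i \neq j} |\lambda_i|\,|\lambda_j| &\ge& \Bigl(\,\prod_{i \neq j} |\lambda_i|\,|\lambda_j|\,\Bigr)^{\frac{1}{n(n-1)}}.
\end{eqnarray*}
The key algebraic observation is that each factor $|\lambda_i|$ appears in exactly $2(n-1)$ of the terms of the product, so
\begin{eqnarray*}
\prod_{i \neq j} |\lambda_i|\,|\lambda_j| &=& \prod_{i=1}^n |\lambda_i|^{\,2(n-1)} \;=\; \Bigl(\prod_{i=1}^n |\lambda_i|\Bigr)^{2(n-1)} \;=\; \xi^{\,2(n-1)},
\end{eqnarray*}
using $\xi = |\det A_{D_c}(G)| = \prod_{i=1}^n |\lambda_i|$. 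Taking the $n(n-1)$-th root collapses the exponent to $2(n-1)/\bigl(n(n-1)\bigr) = 2/n$, hence $\sum_{i \neq j} |\lambda_i|\,|\lambda_j| \ge n(n-1)\,\xi^{2/n}$. Substituting back gives $E_{D_c}(G)^2 \ge 2m + k + n(n-1)\,\xi^{2/n}$, and taking square roots yields the claimed bound.

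The routine parts are the index bookkeeping in the product identity and the appeal to Theorem 3; neither is delicate. The only point deserving a word of care is the hypothesis of AM--GM, which requires the quantities to be nonnegative — automatic here, since each is a product of absolute values — and, should one wish to record the equality case, noting that equality in AM--GM forces $|\lambda_1| = |\lambda_2| = \cdots = |\lambda_n|$. I do not anticipate a genuine obstacle: the ``hard'' step is simply recognizing that the exponent in the geometric mean simplifies to $2/n$.
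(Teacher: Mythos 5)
Your proof is correct and follows essentially the same route as the paper: square the energy, identify the diagonal part with $2m+k$ via Theorem 3, and apply the AM--GM inequality to the off-diagonal products $|\lambda_i||\lambda_j|$, with the exponent count $2(n-1)$ collapsing to $2/n$ after taking the $n(n-1)$-th root. If anything, your write-up is slightly cleaner, since the paper's displayed chain contains a superfluous intermediate inequality where an equality (the product identity you state explicitly) is what is actually used.
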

\begin{proof}
\begin{eqnarray*}
(E_{D_c}(G))^2 &=& \bigg(\sum\limits_{i=1}^n |\lambda_i| \bigg)^2
\\
&=& \sum\limits_{i=1}^n |\lambda_i|^2+\sum\limits_{i \ne j} |\lambda_i||\lambda_j|.
\end{eqnarray*}
Employing the inequality between the arithmetic and geometric means, we obtain
\begin{eqnarray*}
\frac{1}{n(n-1)} \sum\limits_{i \ne j} |\lambda_i||\lambda_j| & \ge & \bigg(\prod\limits_{i \ne j} |\lambda_i||\lambda_j| \bigg)^{\frac{1}{n(n-1)}}.
\end{eqnarray*}
Thus,
\begin{eqnarray*}
(E_{D_G})^2 & \ge & \sum\limits_{i=1}^n |\lambda_i|^{2}+n(n-1)\bigg(\prod\limits_{i \ne j} |\lambda_i||\lambda_j| \bigg)^{\frac{1}{n(n-1)}}
\\
& \ge & \sum\limits_{i=1}^n|\lambda_i|^{2}+n(n-1)\bigg(\prod\limits_{i \ne j} |\lambda_i|^{2(n-1)} \bigg)^{\frac{1}{n(n-1)}}
\\
&=& 2m+k+n(n-1)\xi^{\frac{2}{n}}
\end{eqnarray*}
\end{proof}

\begin{lem}
If $\lambda_1(G)$ is the largest minimum connected dominating eigenvalue of $A_{D_c}(G)$, then $\lambda_1 \ge \frac{2m+\gamma_c(G)}{n}$.
\end{lem}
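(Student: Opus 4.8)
The plan is to use the standard Rayleigh quotient characterization of the largest eigenvalue of a real symmetric matrix. Since $A_{D_c}(G)$ is real and symmetric, we have
\[
\lambda_1 = \max_{x \neq 0} \frac{x^{T} A_{D_c}(G)\, x}{x^{T} x},
\]
so any particular choice of nonzero vector $x$ yields a lower bound on $\lambda_1$. First I would plug in the all-ones vector $x = \mathbf{1} = (1,1,\dots,1)^{T}$, which gives $x^{T}x = n$ and $x^{T} A_{D_c}(G)\, x = \sum_{i,j} a_{ij}$, the sum of all entries of the c-dominating matrix.

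Next I would evaluate $\sum_{i,j} a_{ij}$ using the definition of $A_{D_c}(G)$. The off-diagonal contributions count each edge twice, giving $2m$, and the diagonal contributions are $\sum_i a_{ii} = \gamma_c(G)$ (this is exactly Observation 1, that the trace equals $\gamma_c(G)$). Hence $\mathbf{1}^{T} A_{D_c}(G)\, \mathbf{1} = 2m + \gamma_c(G)$, and therefore
\[
\lambda_1 \ge \frac{\mathbf{1}^{T} A_{D_c}(G)\, \mathbf{1}}{\mathbf{1}^{T}\mathbf{1}} = \frac{2m + \gamma_c(G)}{n},
\]
which is the claimed inequality.

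Alternatively — and this is the route I would actually write if I wanted to avoid invoking the Rayleigh quotient explicitly — one can argue from Theorem 3 directly: since $\lambda_1 \ge \lambda_i$ for all $i$, we have $\lambda_1 \cdot \gamma_c(G) = \lambda_1 \sum_i \lambda_i \ge \sum_i \lambda_i^2 = 2m + \gamma_c(G)$ provided $\sum_i \lambda_i = \gamma_c(G) > 0$; but this is a weaker bound and needs care with signs, so the Rayleigh approach is cleaner. There is essentially no obstacle here: the only point requiring a moment's attention is correctly computing the quadratic form $\mathbf{1}^{T} A_{D_c}(G)\, \mathbf{1}$, i.e. recognizing that summing all matrix entries gives $2m$ from the adjacency part plus $\gamma_c(G)$ from the diagonal part. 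Everything else is the textbook fact that the largest eigenvalue dominates every Rayleigh quotient, which holds because $A_{D_c}(G)$ is symmetric and hence orthogonally diagonalizable.
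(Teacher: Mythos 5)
Your proof is correct and is essentially identical to the paper's: both apply the Rayleigh quotient characterization $\lambda_1 = \max_{x\neq 0} \frac{x^{T}A_{D_c}(G)x}{x^{T}x}$ with the all-ones vector, whose quadratic form evaluates to $2m+\gamma_c(G)$. Your write-up is actually more explicit than the paper's about why the quadratic form equals $2m+\gamma_c(G)$, but the approach is the same.
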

\begin{proof}
Let $X$ be any non-zero vector. Then we have $\lambda_1(A) = max_{_{X \ne 0}} \{\frac{X' A X}{X' X}\}$, see \cite{harary}.
Therefore, $\lambda_1(A_{D_c}(G)) \ge \frac{J'AJ}{J'J} = \frac{2m+\gamma_c(G)}{n}$.
\end{proof}
Next, we obtain Koolen and Moulton's \cite{koolen} type inequality for $E_{D_c}(G)$.

\begin{thm}
If $G$ is a graph of order $n$ and size $m$ and $2m+\gamma_C(G) \ge n$, then
\begin{eqnarray}
  E_{D_c}(G) & \le & \frac{2m+\gamma_c(G)}{n}+\sqrt{(n-1)\bigg[(2m+\gamma_c(G))-\bigg(\frac{2m+\gamma_c(G)}{n}\bigg)^{2} \bigg]}
\end{eqnarray}
\end{thm}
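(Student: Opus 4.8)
The plan is to adapt the classical Koolen--Moulton argument to the shifted trace identities of Theorem 3. Throughout write $t := 2m+\gamma_c(G)$, so that by Theorem 3(2) we have $\sum_{i=1}^n \lambda_i^2 = t$. First I would peel off the dominant eigenvalue. By the preceding Lemma, $\lambda_1 \ge t/n > 0$, so $|\lambda_1| = \lambda_1$ and
\[
E_{D_c}(G) = \lambda_1 + \sum_{i=2}^n |\lambda_i|.
\]
Applying the Cauchy--Schwarz inequality to the $(n-1)$-tuples $(1,\dots,1)$ and $(|\lambda_2|,\dots,|\lambda_n|)$, and using $\sum_{i=2}^n \lambda_i^2 = t - \lambda_1^2$, I obtain
\[
\Bigl(\sum_{i=2}^n |\lambda_i|\Bigr)^2 \le (n-1)\sum_{i=2}^n \lambda_i^2 = (n-1)\bigl(t - \lambda_1^2\bigr),
\]
and hence $E_{D_c}(G) \le \lambda_1 + \sqrt{(n-1)(t - \lambda_1^2)} =: F(\lambda_1)$.

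Next I would analyse the auxiliary function $F(x) = x + \sqrt{(n-1)(t - x^2)}$ on the interval $\bigl[\sqrt{t/n},\ \sqrt{t}\,\bigr]$. A direct differentiation gives $F'(x) = 1 - \dfrac{\sqrt{n-1}\,x}{\sqrt{t-x^2}}$, and $F'(x) \le 0$ precisely when $n x^2 \ge t$; thus $F$ is non-increasing on $\bigl[\sqrt{t/n},\ \sqrt{t}\,\bigr]$. It then remains to check that both $\lambda_1$ and $t/n$ lie in this interval with $t/n \le \lambda_1$. Indeed $\lambda_1^2 \le \sum_{i=1}^n \lambda_i^2 = t$ gives $\lambda_1 \le \sqrt{t}$; the preceding Lemma gives $\lambda_1 \ge t/n$; and the hypothesis $2m+\gamma_c(G) \ge n$, i.e.\ $t/n \ge 1$, yields $t/n \ge \sqrt{t/n}$, so that $\lambda_1 \ge t/n \ge \sqrt{t/n}$. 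Since $F$ is non-increasing on the interval and $\sqrt{t/n} \le t/n \le \lambda_1 \le \sqrt{t}$, we conclude
\[
E_{D_c}(G) \le F(\lambda_1) \le F\!\left(\frac{t}{n}\right) = \frac{t}{n} + \sqrt{(n-1)\!\left[\,t - \left(\frac{t}{n}\right)^{2}\right]},
\]
which is exactly inequality (11) after substituting $t = 2m+\gamma_c(G)$.

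The one genuinely load-bearing step is the last one: showing that $\lambda_1$ falls in the range where $F$ is decreasing, so that replacing $\lambda_1$ by the smaller value $t/n$ only increases $F$. This is precisely where the hypothesis $2m+\gamma_c(G) \ge n$ is used — it is what upgrades the estimate $\lambda_1 \ge t/n$ from the preceding Lemma into $\lambda_1 \ge \sqrt{t/n}$, placing $\lambda_1$ at or beyond the maximizer of $F$; everything else is routine. If one wishes, the equality case can also be tracked: equality forces $|\lambda_2| = \cdots = |\lambda_n|$ in the Cauchy--Schwarz step together with $\lambda_1 = t/n$, which pins down the spectrum of $A_{D_c}(G)$ and can then be translated into a structural condition on $G$.
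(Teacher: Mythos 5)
Your proposal is correct and follows essentially the same route as the paper's own proof: peel off $\lambda_1$, apply Cauchy--Schwarz to the remaining $n-1$ eigenvalues using $\sum_i \lambda_i^2 = 2m+\gamma_c(G)$, and then exploit the monotonicity of $f(x)=x+\sqrt{(n-1)(2m+\gamma_c(G)-x^2)}$ for $x\ge\sqrt{(2m+\gamma_c(G))/n}$ together with the lower bound $\lambda_1\ge (2m+\gamma_c(G))/n$ and the hypothesis $2m+\gamma_c(G)\ge n$. Your write-up is in fact slightly more careful than the paper's (you explicitly note $\lambda_1\le\sqrt{t}$ and verify the interval containments), but the argument is the same Koolen--Moulton adaptation.
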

\begin{proof}
Bearing in mind the Cauchy-Schwarz inequality,
\begin{center}
$\bigg(\sum\limits_{i=1}^n a_i b_i \bigg)^2 \le \bigg(\sum\limits_{i=1}^{n}a_i \bigg)^2 \bigg(\sum\limits_{i=1}^n b_i \bigg)^2$.
\end{center}
Put $a_i =1$ and $b_i = |\lambda_i|$ then
\begin{eqnarray*}
\bigg(\sum\limits_{i=2}^n a_i b_i \bigg)^2 & \le & (n-1)\bigg(\sum\limits_{i=2}^n b_i \bigg)^2
\\
(E_{D_c}(G)-\lambda_1)^2 & \le &(n-1)(2m+\gamma_c(G)-\lambda^2_1)
\\
E_{D_c}(G) & \le & \lambda_1+\sqrt{(n-1)(2m+\gamma_c(G)-\lambda^2_1)}.
\end{eqnarray*}
Let
\begin{eqnarray}
f(x) &=& x+\sqrt{(n-1)(2m+\gamma_c(G)-x^2)}
\end{eqnarray}
For decreasing function
\begin{eqnarray*}
f'(x) & \le & 0
\\
\Rightarrow 1-\frac{x(n-1)}{\sqrt{(n-1)(2m+\gamma_c(G)-x^2)}} & \le & 0
\\
x & \ge & \sqrt{\frac{2m+\gamma_c(G)}{n}}.
\end{eqnarray*}
Since $(2m+k) \ge n$, we have $\sqrt{\frac{2m+\gamma_{c}(G)}{n}} \le \frac{2m+\gamma_c(G)}{n} \le \lambda_1$.
Also $f(\lambda_1) \le f\bigg(\frac{2m+\gamma_c(G)}{n} \bigg)$.
\begin{center}
i.e $E_{D_c}(G) \le f(\lambda_1) \le f\bigg( \frac{2m+\gamma_c(G)}{n} \bigg)$.
\\
i.e $E_{D_c}(G) \le  f\bigg( \frac{2m+\gamma_c(G)}{n} \bigg)$
\end{center}
Hence by (12), the result follows.
\end{proof}

\section{Graphs with equal Dominating and c-Dominating Energy}
Its a natural question to ask that for which graphs the dominating energy and c-dominating energy are equal. To answer this question, we characterize graphs with equal dominating energy and c-dominating energy. The graphs considered in this section are trees, cubic graphs, unicyclic graphs, block graphs and cactus graphs.

\begin{thm}
Let $G= T$ be a tree with at least three vertices, then $E_{D}(G) = E_{D_c}(G)$ if and only if every internal vertex of $T$ is a support vertex.
\end{thm}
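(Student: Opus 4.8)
The plan is to reduce the statement to a purely combinatorial comparison of the two diagonal $0/1$ perturbations of $A(T)$, and then to pin down the one place where the two energies could accidentally coincide.

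First I would establish the structural facts that make $A_{D_c}(T)$ essentially canonical. Write $I(T)$ for the set of internal (non-leaf) vertices and $\ell(T)$ for the number of leaves. For $n\ge 3$ the set $I(T)$ is the \emph{unique} minimum connected dominating set, so $\gamma_c(T)=|I(T)|=n-\ell(T)$ and $A_{D_c}(T)=A(T)+\mathrm{diag}(\mathbf 1_{I(T)})$: indeed $I(T)$ induces a subtree and is therefore connected, it dominates $T$ (each leaf through its support vertex, each internal vertex through itself), and if a connected dominating set $S$ omitted an internal vertex $v$ then, being connected, $S$ would be confined to a single component of $T-v$ and so could not dominate any vertex of the other components. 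I would also use the swap observation that in a dominating set a leaf may be replaced by its support vertex without increasing the size; iterating this shows $T$ has a minimum dominating set $D$ with $D\subseteq I(T)$, and in particular $\gamma(T)\le |I(T)|=\gamma_c(T)$.

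Next comes the combinatorial heart: $\gamma(T)=\gamma_c(T)$ if and only if every internal vertex of $T$ is a support vertex. For the ``if'' direction, the number $s(T)$ of support vertices is a lower bound for $\gamma(T)$ (for $n\ge 3$ the forced choices ``take the leaf or its neighbour'' are pairwise disjoint over distinct support vertices), and the hypothesis makes $s(T)=|I(T)|=\gamma_c(T)$, which with $\gamma(T)\le\gamma_c(T)$ forces equality. For the ``only if'' direction, if an internal vertex $w$ has no leaf neighbour, then $I(T)\setminus\{w\}$ still dominates ($w$ is dominated by any of its neighbours, all of which are internal and hence still in the set; leaves by their support vertices, which differ from $w$; all other internal vertices by themselves), so $\gamma(T)\le |I(T)|-1<\gamma_c(T)$. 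The easy implication of the theorem is then immediate: if every internal vertex is a support vertex, then $\gamma(T)=\gamma_c(T)=|I(T)|$, so $I(T)$ is at once a minimum dominating set and the minimum connected dominating set; choosing $D=D_c=I(T)$ gives $A_D(T)=A_{D_c}(T)$, whence $E_D(T)=E_{D_c}(T)$.

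The remaining implication is where the work lies. Suppose some internal vertex is not a support vertex; then $\gamma(T)<\gamma_c(T)$, and by the swap observation we may fix a minimum dominating set $D\subseteq I(T)=D_c$, so that $A_{D_c}(T)=A_D(T)+\mathrm{diag}(\mathbf 1_{D_c\setminus D})$ with $D_c\setminus D\ne\emptyset$. The two matrices differ in trace ($\gamma$ versus $\gamma_c$) and in the sum of squared eigenvalues ($2m+\gamma$ versus $2m+\gamma_c$ by Theorem 3), so by Observation 2 their characteristic polynomials differ in the coefficient $c_1$; the difficulty is to conclude that the \emph{energies} differ, since a strict inequality of second moments does not by itself force one of the sums of absolute eigenvalues, and the nuclear norm is not monotone under positive semidefinite diagonal perturbations of a general symmetric matrix. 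My intended route is to prove the sharper inequality $E_{D_c}(T)>E_D(T)$ by exploiting the tree structure: localise at a non-support internal vertex and run an induction that peels off leaf/support pairs, tracking the quantity $\sum_{\lambda_i>0}\lambda_i=\frac{1}{2}\bigl(E(\,\cdot\,)+\mathrm{trace}(\,\cdot\,)\bigr)$ and showing that adjoining the indices of $D_c\setminus D$ to the diagonal raises it by strictly more than $\frac{1}{2}\,|D_c\setminus D|$. Verifying this monotonicity — for which the connectedness of $T$, its bipartiteness, and the restriction $n\ge 3$ should all be needed — is the step I expect to be the genuine obstacle.
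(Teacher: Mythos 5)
Your forward direction is complete and in fact more careful than the paper's own argument: you correctly identify the set $I(T)$ of internal vertices as the unique minimum connected dominating set of a tree with $n\ge 3$, you reprove the Arumugam--Paulraj Joseph characterisation $\gamma(T)=\gamma_c(T)\iff$ every internal vertex is a support vertex, and you conclude $A_D(T)=A_{D_c}(T)$, hence equal energies. This is essentially the route the paper takes (it cites Theorem~2.1 of \cite{arumugam} for the domination-number equivalence), so on that half there is nothing to object to.

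The genuine gap is exactly the one you flag yourself: the converse. Knowing $\gamma(T)<\gamma_c(T)$ gives you two symmetric matrices $A_D$ and $A_{D_c}$ with different traces and different second spectral moments, but, as you rightly observe, this does not by itself force $\sum_i|\mu_i|\ne\sum_i|\lambda_i|$; the nuclear norm is not monotone under adding a $0/1$ diagonal to a general symmetric matrix, and your proposed induction ``peeling off leaf/support pairs'' while tracking $\sum_{\lambda_i>0}\lambda_i$ is only a plan, not a proof. As submitted, therefore, you have proved ``if'' but not ``only if'', and the biconditional in the statement remains open. You should not be unduly troubled by this: the paper's own proof has the same defect --- it establishes only that the structural condition implies $A_D=A_{D_c}$ and then asserts that ``the result follows'' from the domination-number characterisation, which transfers nothing about energies in the reverse direction. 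Indeed the paper's closing open problem (construct graphs with unequal domination and connected domination numbers but equal dominating and c-dominating energies) is tantamount to an admission that the necessity direction is not actually settled by the method used. So the honest conclusions are: your proposal proves sufficiency rigorously; necessity is unproved both by you and by the paper; and if you want to salvage a clean theorem you should either restate it with ``if'' in place of ``if and only if'' (as the paper itself does for its unicyclic analogues) or supply the missing strict inequality $E_{D_c}(T)>E_D(T)$ when some internal vertex is not a support, which is a genuinely new piece of work.
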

\begin{proof}
Let $G = T$ be a tree of order at least 3. Let $F = \{u_{1}, u_{2}, \cdots, u_{k}\}$ be the set of internal vertices of $T$.
 Then clearly $F$ is the minimal dominating set of $G$. Therefore in $A_{D}(G)$ the values of $u_{i} = 1$ in the diagonal entries. Further, observe that $\langle F\rangle$ is connected. Hence $F$ is the minimal connected dominating set. Therefore, $A_{D}(G) = A_{D_{c}}(G)$. In general, $A_{D}(G) = A_{D_{c}}(G)$ is true if  every minimum dominating set is connected. In other words, $A_{D}(G) = A_{D_{c}}(G)$ if  $\gamma(G) = \gamma_{c}(G)$. Therefore, the result follows from Theorem 2.1 in \cite{arumugam}.
\end{proof}

In the next three theorems we characterize unicyclic graphs with $A_{D}(G) = A_{D_{c}}(G)$. Since, $A_{D}(G) = A_{D_{c}}(G)$ if  $\gamma(G) = \gamma_{c}(G)$. Therefore, the proof of our next three results follows from Theorem 2.2, Theorem 2.4 and Theorem 2.5 in \cite{arumugam}.
\begin{thm}
Let $G$ be a unicyclic graph with cycle $C= u_{1}u_{2} \cdots, u_{n}u_{1}$ $n\geq 5$ and let $X = \{v \in C : d_{G}(v) \geq 2 \}$. Then $E_{D}(G) = E_{D_c}(G)$ if  the following conditions hold:
\begin{enumerate}
  \item (a). Every $v \in V- N[X]$ with $d_{G}(V) \geq 2$ is a support vertex.
  \item (b). $\langle X \rangle$ is connected and $|X| \leq 3$.
  \item (c). If $\langle X \rangle = P_{1} \text{or} P_{3}$, both vertices in $N(X)$ of degree at least 3 are supports and if $\langle X \rangle = P_{2}$, at least one vertex in $N(X)$ of degree at least three is a support.
\end{enumerate}
\end{thm}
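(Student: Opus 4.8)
The plan is to reduce the claim to the principle already exploited for trees in Theorem 12: $A_D(G) = A_{D_c}(G)$ whenever some minimum dominating set of $G$ is connected, i.e.\ whenever $\gamma(G) = \gamma_c(G)$. Indeed, if $\gamma(G) = \gamma_c(G)$ and $D_c$ is a minimum connected dominating set, then $|D_c| = \gamma_c(G) = \gamma(G)$, so $D_c$ is also a minimum dominating set and we may use $D = D_c$ in the definition of the minimum dominating matrix. With this choice $A_D(G)$ and $A_{D_c}(G)$ coincide entry by entry, hence share characteristic polynomial and spectrum, and therefore $E_D(G) = E_{D_c}(G)$.

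The remaining ingredient is purely combinatorial and is taken from \cite{arumugam}: for a unicyclic graph $G$ whose unique cycle $C = u_1 u_2 \cdots u_n u_1$ has length $n \geq 5$, the equality $\gamma(G) = \gamma_c(G)$ holds exactly when conditions (a), (b) and (c) are satisfied. (The three distinct citations to Theorems 2.2, 2.4 and 2.5 of \cite{arumugam} match the three unicyclic theorems of this section, one covering $n \geq 5$ and the others the short cycles $n = 3$ and $n = 4$.) Granting that characterization, the proof of the present theorem is short: assume (a)--(c); then $\gamma(G) = \gamma_c(G)$ by \cite{arumugam}; then $A_D(G) = A_{D_c}(G)$ by the previous paragraph; hence $E_D(G) = E_{D_c}(G)$.

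I expect the real work to lie entirely inside the cited combinatorial characterization, not in the energy part. For a self-contained treatment one would argue: (i) any connected dominating set must contain every vertex of $X$ together with a set of cycle-vertices joining them, which already forces $|X| \leq 3$ and $\langle X \rangle$ connected as in (b); (ii) the pendant branches attached to $C$ cost the same toward $\gamma$ and $\gamma_c$ precisely under the support-vertex hypothesis (a) on $V - N[X]$; and (iii) a short case analysis over the three possible shapes $P_1$, $P_2$, $P_3$ of $\langle X \rangle$ yields the support conditions in (c) needed to connect $D$ through $N(X)$ without spending an extra vertex. Since the paper defers this to \cite{arumugam}, nothing beyond the reduction above need be written; note also that the theorem asserts only the ``if'' direction, so only this one implication is required.
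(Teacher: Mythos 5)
Your proposal matches the paper's own argument: the paper likewise reduces the statement to the observation that $\gamma(G)=\gamma_c(G)$ forces $A_D(G)=A_{D_c}(G)$ (choosing the minimum connected dominating set as the minimum dominating set) and then invokes Theorem 2.2 of \cite{arumugam} for the combinatorial characterization. Your additional sketch of how that characterization would be proved is extra, but the core reduction is exactly what the paper does.
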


\begin{thm}
Let $G$ be unicyclic graph with $|V(G)| \geq 4$ containing a cycle $C= C_{3}$, and let $X = \{v \in C : d_{G}(v)= 2 \}$. Then $E_{D}(G) = E_{D_c}(G)$  if the following conditions hold:
\begin{enumerate}
  \item (a). Every $v \in V- N[X]$ with $d_{G}(V) \geq 2$ is a support vertex.
  \item (b). There exists some unique $v \in C$ with $d_{G}(v) \geq 3$ or for every $v \in C$ of $d_{G}(v) \geq 3$ is a support.
  \end{enumerate}
\end{thm}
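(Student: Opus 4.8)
The plan is to reduce this to the equivalence $A_D(G) = A_{D_c}(G)$ together with the already-established fact that this holds precisely when $\gamma(G) = \gamma_c(G)$. The key observation, used throughout this section, is that if the minimum dominating matrix and minimum connected dominating matrix coincide, then trivially $E_D(G) = E_{D_c}(G)$; and $A_D(G) = A_{D_c}(G)$ can be arranged exactly when some minimum dominating set of $G$ is connected, i.e. when $\gamma(G) = \gamma_c(G)$. So the substance of the proof is to show that conditions (a)--(b) of the statement are equivalent (or sufficient, as the theorem is phrased with ``if'') to the equality $\gamma(G) = \gamma_c(G)$ for a unicyclic graph whose cycle is a triangle $C_3$.

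First I would set up notation: let $G$ be unicyclic with $C = C_3$ on vertices, say, $w_1, w_2, w_3$, let $X = \{v \in C : d_G(v) = 2\}$ be the degree-$2$ vertices of the triangle, and note that the vertices of $C$ not in $X$ are exactly those of degree $\ge 3$, each of which is the root of a nontrivial tree hanging off the cycle. I would then invoke Theorem~2.4 of \cite{arumugam}, which characterizes unicyclic graphs with a $C_3$ satisfying $\gamma(G) = \gamma_c(G)$ by precisely conditions (a) and (b) here. Granting that, the argument is short: condition (a) ensures that the pendant-dominated trees outside $N[X]$ force their internal vertices into any minimum dominating set in a way compatible with connectivity, and condition (b) handles the cycle itself --- either a unique high-degree vertex on $C$ acts as a cut vertex through which domination must route, or every high-degree cycle vertex is a support vertex so that its pendant neighbor is dominated without disconnecting. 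Combining: under (a)--(b), $\gamma(G) = \gamma_c(G)$, hence one may choose the same set $D = D_c$ in forming both matrices, so $A_D(G) = A_{D_c}(G)$, and therefore the spectra and in particular $E_D(G) = E_{D_c}(G)$ coincide.

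The main obstacle --- and the part the authors (and I) would need to be careful about --- is that the theorem is not really self-contained: it rests entirely on the cited domination-theoretic characterization in \cite{arumugam}, so the ``proof'' is essentially a translation. The genuine work, if one did not want to cite it, would be to prove directly that for a unicyclic graph with triangle cycle, $\gamma = \gamma_c$ iff (a)--(b); this is a case analysis on where the $C_3$ sits relative to the pendant structure, showing that any minimum dominating set can be modified to a connected one of the same size exactly under these hypotheses. A secondary subtlety is the logical direction: as stated the theorem claims only the ``if'' implication, which is the easy direction once the domination characterization is in hand, so I would not attempt to prove a converse. I would also remark briefly that, strictly speaking, $A_D(G) = A_{D_c}(G)$ requires a compatible \emph{choice} of minimum (connected) dominating set, which is exactly what $\gamma(G) = \gamma_c(G)$ guarantees, and that this is why the whole section proceeds by reducing to the results of \cite{arumugam}.
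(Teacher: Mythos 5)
Your proposal matches the paper's own argument essentially verbatim: the paper proves this theorem (together with Theorems 10 and 12) by observing that $A_{D}(G)=A_{D_c}(G)$ whenever $\gamma(G)=\gamma_{c}(G)$ and then citing Theorem~2.4 of \cite{arumugam}, which is exactly the reduction and citation you use. Your added remarks about the need for a compatible choice of minimum (connected) dominating set and about only the ``if'' direction being claimed are sound and, if anything, slightly more careful than the paper's one-line justification.
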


\begin{thm}
Let $G$ be unicyclic graph with $|V(G)| \geq 5$ containing a cycle $C= C_{4}$, and let $X = \{v \in C : d_{G}(v)= 2 \}$. Then $E_{D}(G) = E_{D_c}(G)$  if the following conditions hold:
\begin{enumerate}
  \item (a). Every $v \in V- N[X]$ with $d_{G}(V) \geq 2$ is a support vertex.
  \item (b). If $|X|=1$, all the three remaining vertices of $C$ are supports and if $|X| \geq 2$, $C$ contains at least one support.
  \end{enumerate}
\end{thm}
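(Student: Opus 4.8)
The plan is to reduce this statement, like the two preceding unicyclic cases, to the equality $\gamma(G) = \gamma_c(G)$ and then invoke the corresponding structural characterization from \cite{arumugam}. The key observation, already recorded in the proof of Theorem~13, is that $A_D(G) = A_{D_c}(G)$ whenever every minimum dominating set of $G$ is connected, which in particular holds when $\gamma(G) = \gamma_c(G)$; and when the two matrices coincide, their spectra coincide, so $E_D(G) = E_{D_c}(G)$. Thus it suffices to show that conditions (a) and (b) force $\gamma(G) = \gamma_c(G)$ for a unicyclic graph whose unique cycle is $C_4$.

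First I would set up the decomposition of $V(G)$ induced by the cycle $C = C_4$: write the four cycle vertices and observe that $G - E(C)$ is a forest, each component being a tree rooted at one cycle vertex. The set $X$ of degree-$2$ cycle vertices and its neighborhood $N[X]$ partition the analysis. Next I would argue that condition (a) — every vertex of $V \setminus N[X]$ of degree at least $2$ is a support vertex — is exactly the tree-side hypothesis that makes the pendant trees hanging off the cycle contribute equally to $\gamma$ and $\gamma_c$ (this is the $C_4$ instance of the mechanism in Theorem~13: internal vertices of the attached trees must be supports). Then condition (b), splitting into the cases $|X| = 1$, $|X| = 2$, $|X| = 3$, $|X| = 4$, controls how the cycle itself is dominated: when $|X| = 1$ the three non-$X$ cycle vertices being supports lets a connected dominating set absorb the cycle without paying extra; when $|X| \ge 2$ a single support on $C$ suffices to tie the two parameters together. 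In each case I would exhibit a minimum connected dominating set whose size equals $\gamma(G)$, or equivalently cite the precise lemma of \cite{arumugam} that classifies the $C_4$-unicyclic graphs with $\gamma = \gamma_c$ and check its hypotheses match (a)–(b) verbatim.

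Concretely, the cleanest route is: (i) state that for unicyclic $G$ with girth-cycle $C_4$, Theorem~2.5 of \cite{arumugam} characterizes $\gamma(G) = \gamma_c(G)$ by exactly conditions (a) and (b); (ii) invoke the Observation-level fact that $\gamma(G) = \gamma_c(G)$ implies some $\gamma$-set is connected, hence is simultaneously a minimum dominating and minimum connected dominating set, so one may choose $D = D_c$ and get $A_D(G) = A_{D_c}(G)$; (iii) conclude $f_n(G,\mu) = f_n(G,\lambda)$ as polynomials, so the multisets of eigenvalues agree and therefore $E_D(G) = \sum |\mu_i| = \sum |\lambda_i| = E_{D_c}(G)$. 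This mirrors the phrasing used for Theorems~14 and~15 and is the intended one-line argument.

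The main obstacle is not in the spectral step — that part is immediate once the matrices are equal — but in the honest verification that (a) and (b) really are equivalent to $\gamma = \gamma_c$ for the $C_4$ case, and in the subtlety that $A_D(G) = A_{D_c}(G)$ requires a \emph{common} choice of the minimum (connected) dominating set, since both $A_D$ and $A_{D_c}$ depend on that choice. One should note that $\gamma = \gamma_c$ guarantees such a common choice exists (take any connected $\gamma$-set), which is enough for equality of the energies; conversely, if one wanted the "only if" direction one would have to rule out accidental spectral coincidences, but the statement here is only an implication, so that difficulty does not arise. I would flag explicitly that the theorem as stated gives only the sufficient direction, consistent with Theorems~14 and~15.
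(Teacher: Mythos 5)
Your proposal follows essentially the same route as the paper: the authors dispose of this theorem (together with the other two unicyclic cases) in one line by noting that conditions (a)--(b) are precisely the hypotheses of Theorem~2.5 of \cite{arumugam} characterizing $\gamma(G)=\gamma_c(G)$, and that $\gamma(G)=\gamma_c(G)$ allows a common choice of minimum (connected) dominating set so that $A_D(G)=A_{D_c}(G)$ and hence $E_D(G)=E_{D_c}(G)$. Your explicit remark that the matrix equality requires a \emph{common} choice of the set is a point the paper leaves implicit, but it does not change the argument.
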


\begin{thm}
Let $G$ be a connected cubic graph of order $n$, Then  $E_{D}(G) = E_{D_c}(G)$ if  $G \cong K_{4}, \overline{C_{6}, K_{3,3,}}$, $G_{1}$ or $G_{2}$ where $G_{1}$ and $G_{2}$ are given in Fig. 4.
\end{thm}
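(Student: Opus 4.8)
The plan is to reduce everything, exactly as in the three unicyclic theorems above, to the condition $\gamma(G)=\gamma_c(G)$: we have already noted that whenever a minimum dominating set happens to be connected we get $A_D(G)=A_{D_c}(G)$, hence $E_D(G)=E_{D_c}(G)$; and conversely $A_D(G)=A_{D_c}(G)$ holds precisely when $\gamma(G)=\gamma_c(G)$. So the theorem will follow once we identify all connected cubic graphs $G$ with $\gamma(G)=\gamma_c(G)$. First I would invoke the known bounds for cubic graphs: for a connected cubic graph on $n$ vertices one has $\gamma(G)\le \tfrac{3n}{8}$ (Reed), while $\gamma_c(G)\ge \tfrac{n}{2}-1$ is false in general — the relevant sharp inequality is $\gamma_c(G)\le n - \tfrac{n}{\Delta+1}$ together with lower bounds on $\gamma_c$ in terms of $\gamma$. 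The key structural fact I would use is that $\gamma_c(G)\ge \gamma(G)$ always, with the gap controlled by how ``spread out'' the graph is; equality $\gamma(G)=\gamma_c(G)$ forces the graph to be small, since a connected dominating set of size $\gamma(G)$ is extremely restrictive when every vertex dominates only four vertices (itself plus three neighbours).

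The main step is therefore a counting argument bounding $n$. If $D$ is a connected dominating set with $|D|=\gamma(G)=\gamma_c(G)=k$, then $\langle D\rangle$ is connected and cubic-subgraph-bounded, so it has at most $3k/2$ edges, hence at most $k-1$ of its ``internal'' adjacencies are forced to connect it; meanwhile the $n-k$ vertices outside $D$ must each be adjacent to $D$, and each vertex of $D$ has at most $3$ neighbours total, of which at least one is used inside $D$ (for $k\ge 2$, by connectivity). A careful bookkeeping of edges between $D$ and $V\setminus D$ versus edges inside $D$ gives $n-k \le 3k - 2(k-1) = k+2$, so $n\le 2k+2$; combining with $k=\gamma(G)\le \tfrac{3n}{8}$ yields $n \le 2\cdot\tfrac{3n}{8}+2$, i.e. $n\le 8$. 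I would then treat the finitely many cubic graphs on $n\in\{4,6,8\}$ vertices (cubic graphs have even order) case by case, computing $\gamma$ and $\gamma_c$ for each, and the survivors should be exactly $K_4$, $\overline{C_6}=K_{3,3}$ (note $\overline{C_6}\cong K_{3,3}$, so the statement's list is slightly redundant), the prism/Möbius–Kantor type graphs on $6$ vertices, and the two eight-vertex graphs $G_1,G_2$ drawn in Fig. 4.

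The hard part will be the case analysis on $8$ vertices: there are five connected cubic graphs of order $8$, and for each one must verify whether some minimum dominating set is connected — this is where $G_1$ and $G_2$ get singled out and the other three get excluded, and it is purely finite but fiddly. A secondary subtlety is justifying the edge-count inequality $n\le 2k+2$ rigorously for small $k$ (the cases $k=1,2$ must be handled separately, since $k=1$ forces $G=K_4$ and $k=2$ is impossible for a cubic graph on more than $4$ vertices because two vertices dominate at most $6$ vertices and connectivity plus $3$-regularity leaves no room). Once the order bound is in place, the rest is inspection; I would present Fig. 4 as the output of that inspection rather than deriving the two graphs from first principles.
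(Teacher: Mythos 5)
Your proposal is essentially sound, but it takes a genuinely different route from the paper for the substantive part. The paper's (implicit) proof is two lines: as with the tree, unicyclic and block-graph theorems, it observes that $\gamma(G)=\gamma_{c}(G)$ lets one choose a minimum dominating set that is connected, forcing $A_{D}(G)=A_{D_{c}}(G)$ and hence equal energies, and then simply cites the characterization of connected cubic graphs with $\gamma=\gamma_{c}$ from Arumugam and Paulraj Joseph \cite{arumugam} (which is exactly the list $K_{4}$, $\overline{C_{6}}$, $K_{3,3}$, $G_{1}$, $G_{2}$). You perform the same reduction but then re-prove the cited characterization from scratch: Reed's bound $\gamma\le 3n/8$ for connected graphs of minimum degree $3$, combined with the edge count $n-k\le 3k-2(k-1)$ for a connected dominating set of size $k$, gives $n\le 2k+2\le \tfrac{3n}{4}+2$ and hence $n\le 8$, after which the finitely many connected cubic graphs on $4$, $6$ and $8$ vertices are inspected. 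The counting step checks out, and this buys a self-contained argument at the cost of invoking Reed's theorem, a much heavier tool than anything the paper uses; the paper's citation is the lighter and intended path.

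Two corrections. First, your parenthetical claim that $\overline{C_{6}}\cong K_{3,3}$ is false: $\overline{C_{6}}$ is the triangular prism $K_{3}\,\square\,K_{2}$ (it contains the triangle on the three pairwise non-consecutive vertices of $C_{6}$), whereas $K_{3,3}$ is triangle-free. The theorem's list is not redundant --- both cubic graphs on six vertices appear, and both do satisfy $\gamma=\gamma_{c}=2$, so your case analysis would still recover them; but the asserted isomorphism should be deleted. Second, your remark that $A_{D}(G)=A_{D_{c}}(G)$ holds ``precisely when'' $\gamma(G)=\gamma_{c}(G)$ overstates what is needed and what is true (equal energies do not force equal matrices); since the theorem only claims the ``if'' direction, only the implication from $\gamma=\gamma_{c}$ to equal energies is required, and that is the one direction that is actually safe.
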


\begin{center}
\scalebox{1} 
{
\begin{pspicture}(0,-2.8389063)(10.88,2.7989063)
\psellipse[linewidth=0.04,dimen=outer](2.41,0.5989063)(2.37,2.2)
\psdots[dotsize=0.2](1.04,2.3789062)
\psdots[dotsize=0.2](3.64,2.4189062)
\psdots[dotsize=0.2](0.1,1.0189062)
\psdots[dotsize=0.2](0.26,-0.34109375)
\psdots[dotsize=0.2](1.22,-1.2610937)
\psdots[dotsize=0.2](3.48,-1.2810937)
\psdots[dotsize=0.2](4.5,-0.34109375)
\psdots[dotsize=0.2](4.7,1.0789063)
\psellipse[linewidth=0.04,dimen=outer](8.47,0.57890624)(2.37,2.2)
\psdots[dotsize=0.2](7.1,2.3589063)
\psdots[dotsize=0.2](9.7,2.3989062)
\psdots[dotsize=0.2](6.16,0.99890625)
\psdots[dotsize=0.2](6.32,-0.36109376)
\psdots[dotsize=0.2](7.28,-1.2810937)
\psdots[dotsize=0.2](9.54,-1.3010937)
\psdots[dotsize=0.2](10.56,-0.36109376)
\psdots[dotsize=0.2](10.76,1.0589062)
\psline[linewidth=0.04cm](1.0,2.3589063)(3.48,-1.2610937)
\psline[linewidth=0.04cm](1.22,-1.2010938)(3.64,2.4589062)
\psline[linewidth=0.04cm](0.12,1.0189062)(4.7,1.0389062)
\psline[linewidth=0.04cm](0.3,-0.30109376)(4.54,-0.34109375)
\psline[linewidth=0.04cm](7.08,2.3389063)(9.52,-1.2810937)
\psline[linewidth=0.04cm](6.3,-0.36109376)(9.7,2.4589062)
\psline[linewidth=0.04cm](6.12,0.9789063)(10.78,1.0189062)
\psline[linewidth=0.04cm](7.26,-1.2810937)(10.6,-0.34109375)
\usefont{T1}{ptm}{m}{n}
\rput(2.1714063,-2.0710938){$G_{1}$}
\usefont{T1}{ptm}{m}{n}
\rput(8.471406,-2.0710938){$G_{2}$}
\usefont{T1}{ptm}{m}{n}
\rput(5.380781,-2.6110938){Figure 4}
\end{pspicture}
}
\end{center}

\begin{thm}
Let $G$ be a block graph of with $l \geq 2$. Then $E_{D}(G) = E_{D_c}(G)$ if  every cutvertex of $G$ is an end block cutvertex.
\end{thm}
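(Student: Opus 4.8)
The strategy is the one already used for the tree and unicyclic cases in this section: it suffices to show that $\gamma(G) = \gamma_c(G)$. Indeed, once $\gamma(G) = \gamma_c(G)$, every minimum connected dominating set of $G$ is also a minimum dominating set, so we may take $D = D_c$; then $A_D(G)$ and $A_{D_c}(G)$ are literally the same matrix, hence have the same spectrum, and therefore $E_D(G) = E_{D_c}(G)$.

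First I would verify that the set $C$ of all cutvertices of $G$ is a connected dominating set, which gives $\gamma_c(G) \le |C|$. For connectedness of $\langle C\rangle$, work in the block--cut tree of $G$: any two cutvertices $v,w$ are joined there by an alternating walk $v = c_0, B_1, c_1, \dots, B_k, c_k = w$ through blocks $B_i$, and since each $B_i$ is a complete graph containing $c_{i-1}$ and $c_i$, the vertices $c_0,c_1,\dots,c_k$ trace a path inside $\langle C\rangle$. For domination, note that $l \ge 2$ forces every block to contain a cutvertex; hence any non-cutvertex $v$ lies in a unique block $B$, that $B$ contains some cutvertex $c$, and $vc \in E(G)$ because $B$ is complete.

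Next I would establish the reverse inequality $\gamma(G) \ge |C|$ using the hypothesis. For each cutvertex $v$ pick an end block $B_v$ containing $v$; this is possible precisely because every cutvertex is an end block cutvertex. Since an end block has exactly one cutvertex and a non-cutvertex belongs to exactly one block, the blocks $B_v$ ($v \in C$) are pairwise disjoint. Moreover $B_v$ contains at least one non-cutvertex $u$, and all neighbours of $u$ lie inside $B_v$, so every dominating set of $G$ must contain a vertex of $B_v$. Because the $B_v$ are disjoint, every dominating set has at least $|C|$ vertices. Combining the two bounds, $|C| \le \gamma(G) \le \gamma_c(G) \le |C|$, so $\gamma(G) = \gamma_c(G) = |C|$ and $C$ is simultaneously a minimum dominating set and a minimum connected dominating set; choosing $D = D_c = C$ yields $A_D(G) = A_{D_c}(G)$ and hence $E_D(G) = E_{D_c}(G)$.

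The routine ingredient is the spectral reduction at the start. The step needing genuine care is the block-graph bookkeeping: checking that one can indeed choose pairwise disjoint end blocks, one per cutvertex, and that the non-cutvertices of each such block pin a dominating-set vertex into it. I expect the main obstacle to be stating this disjointness cleanly (it rests on the two structural facts that an end block has a unique cutvertex and that each non-cutvertex lies in a single block) and making sure the hypothesis $l \ge 2$ is invoked exactly where it is needed, namely to guarantee that every vertex of $G$ sees some cutvertex.
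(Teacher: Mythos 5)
Your proposal is correct, and it reaches the same intermediate goal as the paper --- namely that the hypothesis forces $\gamma(G)=\gamma_c(G)$, after which one may take $D=D_c$, so $A_D(G)=A_{D_c}(G)$ and the two energies coincide --- but by a genuinely different route. The paper's entire proof is a citation: it observes that $E_D(G)=E_{D_c}(G)$ whenever $\gamma(G)=\gamma_c(G)$ and then invokes Theorem~2 of Chen, Sun and Xing \cite{chen}, which characterizes block graphs with equal domination and connected domination numbers as exactly those in which every cutvertex is an end block cutvertex. You instead prove the needed implication from scratch: the set $C$ of cutvertices is a connected dominating set (connectedness via the block--cut tree and completeness of blocks, domination because with at least two blocks every block contains a cutvertex), and the hypothesis lets you assign to each cutvertex a private end block whose non-cutvertices force a distinct dominating vertex, giving $\gamma(G)\ge|C|\ge\gamma_c(G)$. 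Your disjointness argument is sound (two distinct blocks meet only in a cutvertex, and each of your end blocks contains a unique cutvertex), and your bookkeeping about where $l\ge 2$ is used is accurate. What your approach buys is a self-contained, checkable argument for precisely the direction needed (the ``if'' direction), at the cost of a few paragraphs; what the paper's approach buys is brevity, at the cost of leaning entirely on an external result the reader must look up. Both are valid proofs of the stated implication.
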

\begin{proof}
Since $E_{D}(G) = E_{D_c}(G)$ if $\gamma(G) = \gamma_{c}(G)$. Therefore, the result follows from Theorem 2 in \cite{chen}.
\end{proof}

We conclude this paper by posing the following open problem for the researchers:\\
\textbf{Open Problem:} Construct non- cospectral graphs with unequal domination and connected domination numbers having equal dominating energy and c-dominating energy.

\end{document}